\newcommand{\R}{\mathbb R}
\newcommand{\C}{\mathbb{C}}
\newcommand{\Z}{\mathbb{Z}}
\newcommand{\N}{\mathbb{N}}
\newcommand{\Q}{\mathbb{Q}}
\newcommand{\cA}{\mathcal{A}}
\newtheorem{proposition}{Proposition}
\newtheorem{theorem}[proposition]{Theorem}
\newtheorem{definition}[proposition]{Definition}
\newtheorem{remark}[proposition]{Remark}
\newtheorem{lemma}[proposition]{Lemma}
\newtheorem{example}{Example}
\newenvironment{proof}{
\trivlist \item[\hskip \labelsep\mbox{\it Proof.
}]}{\hfill\mbox{$\square$}
\endtrivlist}
\title{Puiseux expansions and non-isolated points in algebraic varieties\footnote{Partially supported by the following Argentinian grants: PIP 0099/11 CONICET and UBACYT 20020120100133 (2013/2016).}}
\author{Mar\'\i a Isabel Herrero$^{\sharp, \diamond}$, Gabriela Jeronimo$^{\sharp,\dag,\diamond}$, Juan
Sabia$^{\dag,\diamond}$\\[5mm]
{\small $\sharp$ Departamento de Matem\'atica, Facultad de
Ciencias Exactas y
Naturales,} \\[-1mm] {\small Universidad de Buenos Aires, Ciudad
Universitaria, (1428) Buenos Aires, Argentina}\\[2mm]
{\small $\dag$ Departamento de Ciencias Exactas, Ciclo B\'asico
Com\'un,}\\[-1mm]
{\small Universidad de Buenos Aires, Ciudad Universitaria, (1428)
Buenos Aires, Argentina}\\[2mm]
{\small $\diamond$ IMAS, CONICET, Argentina }}
\begin{document}

\maketitle

\begin{abstract}
We consider the problem of deciding whether a common solution to a multivariate polynomial equation system is  isolated or not. We present conditions on a given truncated Puiseux series vector centered at the point ensuring that it is not isolated. In addition, in the case that the set of all common solutions of the system has dimension $1$, we obtain further conditions specifying to what extent the given vector of truncated Puiseux series coincides with the initial part of a parametrization of a curve of solutions passing through the point.
\end{abstract}

\bigskip

\textbf{Keywords:} Algebraic varieties, isolated points, Puiseux series, curves.

\section{Introduction}

A usual way to describe the set of complex common zeros
$V(\mathbf{f})$ of a finite family of multivariate polynomials $\mathbf{f}$ with
rational coefficients is by means of the equidimensional
decomposition of the algebraic variety $V(\mathbf{f})$. Several
general algorithmic symbolic procedures computing  polynomials characterizing each
equidimensional component have been proposed (see, for example,
\cite{GH91}, \cite{EM99}, \cite{JS02}, \cite{Lecerf03} and \cite{JKSS04}).

An alternative encoding of an equidimensional variety that
originated in the numerical algebraic geometry framework is by
means of a \textit{witness point set}, namely a suitable linear slicing of the variety consisting of a finite set of points containing as many points as the degree of the
variety (see \cite[Definition 13.3.1]{SW05}). This representation
has been applied for algorithmic numerical equidimensional and irreducible
decomposition (see \cite[Chapters 13-15]{SW05}). In this context, for instance, the software package PHCpack implements homotopy continuation methods to compute a numerical irreducible decomposition (see \cite{SVW03}). However, numerical approaches are subject to ill conditioning, which may lead to propagation of roundoff errors and inconclusive results.

In the symbolic framework, for certain families of polynomial systems, larger sets of points
representing the equidimensional components of a variety (called \textit{witness supersets}, as introduced in
\cite[Definition 13.6.1]{SW05}) can be computed with better
complexities than in previous symbolic decomposition procedures (see for instance,
\cite{HJS13}, where the case of sparse polynomial systems with $n$
equations in $n$ variables is considered), but no algorithm
discarding extra points within the same complexity order is known.
This motivates the search for new symbolic tools that may lead to
solve this problem.

A first question that arises in this context is, given a point $ \xi
\in V(\mathbf{f})$, to decide algorithmically  whether it is
isolated or not (numerical algorithms dealing with this task can be
found in \cite[Section 13.7.2]{SW05}, \cite{BHPS09} or \cite{KL08}).

For a system of two polynomials in two variables, in
\cite[Proposition 5.3]{AV11} it is stated that, under certain
hypotheses, if the second term in the Puiseux series expansion at a
common root $\xi$ can be computed, then there exists a curve of
solutions for the original system; however, the result does not hold
for arbitrary bivariate polynomial systems. In \cite{AV12} the authors
extend this result to the case of $n$ variables and apply it
successfully to produce exact representations for solution sets of the cyclic $n$-roots problem.

In this paper we give conditions a vector of truncated Puiseux
series  $\Theta$ centered at a point $\xi\in V(\mathbf{f})$ must
fulfill in order to ensure that $\xi$ is not an isolated point of
$V(\mathbf{f})$ in the general case. Moreover, if the dimension of
$V(\mathbf{f})$ is $1$, we give further conditions on $\Theta$ to
ensure that its initial part  coincides with the initial part of a
Puiseux series expansion of a parametrization of a curve in
$V(\mathbf{f})$ containing $\xi$. We will assume that $\mathbf{f}$ does not vanish identically at $\Theta$ since, otherwise, the results follow straightforwardly.

We first consider the case of two polynomials in two variables by
means of elementary resultant-based techniques. The given conditions
depend on the degrees of the polynomials involved.
Then,
we deal with the general case of $n$-variate polynomial systems,
obtaining conditions that depend on invariants associated to the
ideal the polynomials generate and the degree of the variety they
define.

\section{Bivariate polynomials}\label{sec:2var}

The branches of a plane curve in $\C^2$  through a point  can be
locally parametrized by means of Puiseux series (see, for example,
\cite{Walker}). A Puiseux series with complex coefficients
centered at $\xi_1 \in \C$ is a formal expression of the form
$\sum\limits_{i\in \N_0} a_i (t-\xi_1)^{\gamma_i}$, where $a_i\in
\C$ for every $i\in \N_0$, and $\{\gamma_i\}_{i \in\N_0}$ is a
family of rational numbers with bounded denominators such that
$\gamma_i<\gamma_{i+1}$ for every $ i \in \N_0$. We denote
$\textrm{ord}_{(t-\xi_1)} (\sum\limits_{i\in \N_0} a_i
(t-\xi_1)^{\gamma_i})= \min\{ \gamma_i \mid a_i\ne 0\}$ the order
of this Puiseux series. We will write $\C\{\{t-\xi_1\}\}$ for the
ring of all Puiseux series with complex coefficients centered at
$\xi_1$.

Given a polynomial $q\in \C[t,Y]$ and a point $\xi=(\xi_1,\xi_2) \in \C^2$ such that $q(\xi)=0$ and $q(\xi_1, Y) \not\equiv 0$, there is at least one  Puiseux series $\sum\limits_{i\in \N_0}a_i(t-\xi_1)^{\gamma_i}$ such that $\gamma_0=0$, $a_0 = \xi_2$ and  $q(t, \sum\limits_{i\in \N_0}a_i(t-\xi_1)^{\gamma_i})=0$ (such a Puiseux series will be called a parametrization of the curve through $\xi$).
In the following lemma we give conditions on the vanishing order of the polynomial $q$ at a truncated Puiseux series to establish to what extent it coincides with a parametrization of the curve defined by $q$.

\begin{lemma}\label{lem:aprox} Let $q\in \C[t, Y]$ and $\xi = (\xi_1, \xi_2)\in \C^2$ such that $q(\xi) =0$. Let $\theta=\sum\limits_{i=0}^N a_i(t-\xi_1)^{\gamma_i}$ with $a_0=\xi_2$ and
$\gamma_0, \dots, \gamma_N, L \in \Q$ such that
$0=\gamma_0<\dots<\gamma_N\le L$ and ${\rm
ord}_{(t-\xi_1)}(q(t,\theta))>L$.
If $L\ge {\rm mult}(\xi_1, c)$, where $c\in \C[t]$ is the
leading coefficient of $q$ as a polynomial in $\C[t][Y]$, then
there is a parametrization of a branch of the curve $V(q)\subset
\C^2$ through $\xi$ whose initial terms are $(t,\sum\limits_{i=0}^M
a_i(t-\xi_1)^{\gamma_i})$, where $M= \max \{ i \in \{0,\dots, N\}
\mid \gamma_i \le \dfrac{L-{\rm mult}(\xi_1, c)}{\deg_Y(q)}\}$.
\end{lemma}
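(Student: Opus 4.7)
The plan is to exploit the factorization of $q$ over the algebraic closure of $\C\{\{t-\xi_1\}\}$, compare orders term-by-term, and then extract a single good root by a pigeonhole argument.

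First I would invoke Puiseux's theorem: since the field of Puiseux series in $t-\xi_1$ is algebraically closed, one can write
\[
q(t,Y) \;=\; c(t) \prod_{j=1}^{d} \bigl(Y-\eta_j(t)\bigr),
\]
with $d=\deg_Y(q)$ and $\eta_j(t)$ Puiseux series. Specializing $Y=\theta$ and taking the order in $t-\xi_1$ gives
\[
{\rm ord}_{(t-\xi_1)}\bigl(q(t,\theta)\bigr) \;=\; {\rm mult}(\xi_1,c) \;+\; \sum_{j=1}^{d} {\rm ord}_{(t-\xi_1)}\bigl(\theta-\eta_j(t)\bigr).
\]
Combining this with the hypothesis ${\rm ord}_{(t-\xi_1)}(q(t,\theta))>L$ yields $\sum_j {\rm ord}(\theta-\eta_j) > L-{\rm mult}(\xi_1,c)$.

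Next, a pigeonhole step: since the sum of $d$ rational numbers exceeds $L-{\rm mult}(\xi_1,c)$, there exists an index $j_0$ with
\[
{\rm ord}_{(t-\xi_1)}\bigl(\theta-\eta_{j_0}(t)\bigr) \;>\; \frac{L-{\rm mult}(\xi_1,c)}{d} \;=:\;\alpha.
\]
By the hypothesis $L\ge {\rm mult}(\xi_1,c)$ we have $\alpha\ge 0$, so $M$ is well defined and $M\ge 0$. From the strict inequality $\alpha\ge \gamma_M$ (and $\gamma_{M+1}>\alpha$ when $M<N$), the coefficient of $(t-\xi_1)^\gamma$ in $\theta-\eta_{j_0}$ vanishes for every exponent $\gamma\le \gamma_M$. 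This forces $\eta_{j_0}$ to have the same initial segment as $\theta$ up to and including $\gamma_M$, that is,
\[
\eta_{j_0}(t) \;=\; \sum_{i=0}^{M} a_i(t-\xi_1)^{\gamma_i} + (\text{terms of exponent}>\gamma_M).
\]
In particular, since $\gamma_0=0$ and $a_0=\xi_2$, $\eta_{j_0}$ has nonnegative order and specializes to $\xi_2$ at $t=\xi_1$, so $(t,\eta_{j_0}(t))$ is a Puiseux parametrization of a branch of $V(q)$ through $\xi$, proving the statement.

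The only place where care is needed is in the pigeonhole step: some of the roots $\eta_j$ could have negative order (``polar'' branches) or a constant term different from $\xi_2$, making the corresponding summands ${\rm ord}(\theta-\eta_j)$ nonpositive. However, this does not spoil the maximum: the sum still exceeds $L-{\rm mult}(\xi_1,c)$, so some summand must exceed the average $\alpha$, and for that $j_0$ the inequality ${\rm ord}(\theta-\eta_{j_0})>\alpha\ge 0$ automatically excludes both polar and ``wrong constant term'' behaviors, confirming that $\eta_{j_0}$ really parametrizes a branch through $\xi$. This is the main conceptual point; the remaining computations (orders of products, truncation of series) are routine.
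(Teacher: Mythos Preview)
Your argument is correct and is essentially the same as the paper's: both factor $q$ over the Puiseux field, use additivity of the order, and isolate a single root $\eta_{j_0}$ agreeing with $\theta$ up to exponent $\gamma_M$. The only stylistic difference is that the paper phrases the pigeonhole step contrapositively (if no $\eta_h$ begins with $\sum_{i\le M}a_i(t-\xi_1)^{\gamma_i}$ then every $\mathrm{ord}(\theta-\eta_h)\le \alpha$, forcing $\mathrm{ord}\,q(t,\theta)\le L$), whereas you argue directly; your extra remark handling roots of negative order is a nice explicit check that the paper leaves implicit.
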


\begin{proof}
Consider $q$ as a polynomial in $ \C\{\!\{t-\xi_1\}\!\}[Y]$ and its linear factorization
$q=c \prod\limits_{h=1}^{D}(Y-\eta_h)$, where $c\in \C[t]$. If no $\eta_h\in \C\{\!\{t-\xi_1\}\!\}$ begins with
$\sum\limits_{i=0}^{M}a_i(t-\xi_1)^{\gamma_i}$, by the definition
of $M$ we have that $\textrm{ord}_{(t-\xi_1)}q(t,\theta)= {\rm mult}(\xi_1, c) + \sum\limits_{h=1}^D \textrm{ord}_{(t-\xi_1)}(\theta - \eta_h)\le  {\rm mult}(\xi_1, c) + D \, \dfrac{L-{\rm mult}(\xi_1, c)}{D} = L$,
contradicting the assumption that $\textrm{ord}_{(t-\xi_1)}(q(t,\theta))>L$.
\end{proof}

The following example shows that the bound given in the previous lemma can be attained:

\begin{example}  Let $q \in \C[t, Y]$ be the polynomial $q(t,Y) = t^{d_1}(Y-1)^{d_2}$ and $\xi=(0,1)$ a zero of $q$. Here, $(t,1)$ is a curve in $V(q)$ passing through $\xi$.  Let $\gamma, L \in \Q$ such that $L\ge d_1$ and $0< \gamma\le L$. Then $\theta= 1+t^\gamma$ satisfies that ${\rm{ord}}_t(q(t,\theta)) >L$  if and only if
$\dfrac{L-d_1}{d_2}<\gamma$, and the bound given by the lemma in this case is exactly
$\dfrac{L-d_1}{d_2}$.
\end{example}

\begin{remark}\label{rem:NH}
If  $q(\xi) = 0$ and $\cfrac{\partial
q}{\partial X_2}(\xi)\ne 0$, there exists a unique
formal power series with integer exponents $\sum\limits_{i \in \N_0} c_i(t-\xi_1)^i$ such that $q(t,\sum\limits_{i \in \N_0} c_i(t-\xi_1)^i)=0$ and $c_0=\xi_2$, and the Newton-Hensel lifting
gives a constructive way to approximate it
(see \cite[Lemma 3]{HKPSW00}, \cite{GLS01} for algorithmic
versions of this result). In this case, under the assumptions of
Lemma \ref{lem:aprox}, $a_i = c_i$ for all $i\le L-{\rm mult}(\xi_1, c)$. This can be proved following the arguments in the proof of the lemma and using that there is at most one root
$\eta_{h}$ of $q$ such that ${\rm{ord}}_{(t-\xi_1)}(\xi_2-\eta_h)>0$.
\end{remark}

Now we analyze our main problem in the case of  two bivariate polynomials. Consider first the following easy example:

\begin{example}
Let $f_1, f_2\in \C[X_1,X_2]$ be the polynomials
$$f_1(X_1,X_2)=X_2-1+X_1+X_1^{d_1}\mbox{ and }f_2(X_1,X_2)=
X_2-1+X_1+X_1^{d_2}.$$ It is clear that the zero sets of $f_1$ and $f_2$ in $\C^2$ are the curves parametrized by $(t, 1 - t - t^{d_1})$  and  $(t, 1 - t - t^{d_2})$ respectively and, so, if $d_1 \ne d_2$, there is no curve of common
zeroes for these polynomials. Nonetheless, the terms of degree lower than $\min\{d_1,d_2\}$ of both expansions coincide. \end{example}

The question that arises is, given two polynomials $f_1,f_2 \in \C[X_1,X_2]$ with a common solution $\xi$, to what extent the Puiseux series expansions of parametrizations of curves of solutions through $\xi$ of $f_1$ and $f_2$ respectively must coincide in order to be able to conclude that $f_1$ and $f_2$ share a curve of solutions.

In  \cite[Proposition 5.3]{AV11}, it is stated that, given
$f_i(X_1,X_2)=p_i(X_2)+P_i(X_1,X_2)$ for $i=1,2$, where $p_i$ have nonzero
constant term and all terms in $P_i$ have a positive power in $X_1$, and $\xi_2 \in \C-\{0 \}$ such that
$p_i(\xi_2)=0$, $\cfrac{\partial p_i}{\partial X_2}(\xi_2)\ne 0$
and $f_i(t,\xi_2)\neq 0$  for $i=1,2$, if the
exponents and coefficients of the first two terms $(X_1 = t, X_2 =
\xi_2 + a_1t^{\gamma_1})$ of the series expansions at the
common root $\xi= (0,\xi_2)$ coincide, there exists a curve of common
solutions containing $\xi$ and these first two terms are in fact the leading part of a
Puiseux series expansion of a regular common factor of $f_1$ and
$f_2$. As we can see in the previous example, this is not always
the case.

The example also shows that the  precision required to ensure the existence of a curve of common zeros
containing $\xi$ depends on the degrees of the polynomials involved. Here we present a lower bound
for this precision.

\begin{proposition} \label{lem:bivariate} Let $f_1, f_2 \in \C[X_1, X_2]$
be polynomials with positive degree in the variable $X_2$ and with a common zero $\xi=(\xi_1,\xi_2)$. Let
$\theta=\sum\limits_{i= 0}^Na_i(t-\xi_1)^{\gamma_i}$ with
$a_0=\xi_2$ and $\gamma_0, \dots, \gamma_N, L \in \Q$ such that
$0=\gamma_0<\dots<\gamma_N\le L$ and ${\rm
ord}_{(t-\xi_1)}(f_j(t,\theta))>L $ for $j=1,2$. Let
$d_{ij}=\deg_{X_i}(f_j)$. If $L\ge d_{11}d_{22}+d_{12}d_{21},$
then there exists a curve of common zeroes of $f_1$ and $f_2$ that
contains $\xi$. Moreover, there is a parametrization of the curve
whose initial terms are $(t,\sum\limits_{i=
0}^{M}a_i(t-\xi_1)^{\gamma_i})$, where
$$M= \max\{i \in \{0, \dots, N\} \ | \ \gamma_i\le
\frac{L-(d_{11}d_{22}+d_{12}d_{21})-{\rm min}\{d_{11},
d_{12}\}}{{\rm min}\{d_{21}, d_{22}\}}+d_{11}+d_{12}\}.$$
\end{proposition}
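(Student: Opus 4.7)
The plan is to exploit the resultant $R(X_1) := \mathrm{Res}_{X_2}(f_1, f_2) \in \C[X_1]$, whose $X_1$-degree satisfies the classical estimate $\deg R \le d_{11}d_{22} + d_{12}d_{21} \le L$. The Sylvester matrix provides a B\'ezout-type identity $A\,f_1 + B\,f_2 = R(X_1)$ with $A, B \in \C[X_1, X_2]$. Specializing $X_1 = t$, $X_2 = \theta(t)$, both summands on the left have order strictly greater than $L$ at $t = \xi_1$, since $A(t,\theta)$ and $B(t,\theta)$ have non-negative order (being polynomials evaluated at a Puiseux series of non-negative order) and $\mathrm{ord}_{(t-\xi_1)}(f_j(t, \theta)) > L$. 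A polynomial of degree $\le L$ whose order at $t = \xi_1$ exceeds $L$ must vanish identically, so $R \equiv 0$; by Gauss' lemma $f_1, f_2$ share a primitive common factor $g \in \C[X_1, X_2]$ with $\deg_{X_2}(g) \ge 1$.

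Next I would show $\xi \in V(g)$. If not, set $h_j := f_j/g$, which are coprime in $\C(X_1)[X_2]$ with $h_j(\xi) = 0$; since $g(\xi) \ne 0$ forces $\mathrm{ord}_{(t-\xi_1)}(g(t,\theta)) = 0$, we get $\mathrm{ord}(h_j(t,\theta)) > L$. The same B\'ezout argument applied to $h_1, h_2$ would then force $\mathrm{Res}_{X_2}(h_1,h_2) \equiv 0$, contradicting coprimeness. Thus $V(g)$ is a common curve through $\xi$, settling the existence part.

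For the parametrization statement, I would apply Lemma~\ref{lem:aprox} to $q = g$. The required lower bound on $\mathrm{ord}(g(t,\theta))$ comes from $f_j = g h_j$ combined with an upper bound on $\min_j \mathrm{ord}(h_j(t,\theta))$: evaluating the B\'ezout identity for the coprime pair $h_1, h_2$ at $(t,\theta)$ gives $\min_j \mathrm{ord}(h_j(t,\theta)) \le \deg\mathrm{Res}_{X_2}(h_1,h_2) \le (d_{11}-e_1)(d_{22}-e_2) + (d_{12}-e_1)(d_{21}-e_2)$, where $e_i := \deg_{X_i}(g)$. Picking the index $j$ attaining the minimum yields $\mathrm{ord}(g(t,\theta)) > L - (d_{11}-e_1)(d_{22}-e_2) - (d_{12}-e_1)(d_{21}-e_2)$. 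Feeding this into the formula $M = \max\{i : \gamma_i \le (L_g - \mathrm{mult}(\xi_1, c_g))/\deg_{X_2}(g)\}$ of Lemma~\ref{lem:aprox}, using $\mathrm{mult}(\xi_1, c_g) \le e_1$, $\deg_{X_2}(g) = e_2$, and the global bounds $e_1 \le \min\{d_{11}, d_{12}\}$, $e_2 \le \min\{d_{21}, d_{22}\}$, yields the stated formula for~$M$.

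The main obstacle lies in this last step: the quotient produced by Lemma~\ref{lem:aprox} a priori depends on the unknown invariants $(e_1, e_2)$ of the common factor $g$, so one must verify that its minimum over all admissible pairs is uniformly at least $(L - d_{11}d_{22} - d_{12}d_{21} - \min\{d_{11}, d_{12}\})/\min\{d_{21}, d_{22}\} + d_{11} + d_{12}$. This reduces to an elementary but not entirely mechanical finite optimization, in which the signs of the intermediate expressions $d_{21}+d_{22}-2e_2-1$ and $L-B$ must be tracked carefully.
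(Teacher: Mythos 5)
Your proposal is correct and follows essentially the same route as the paper's own proof: vanishing of $\mathrm{Res}_{X_2}(f_1,f_2)$ via the Sylvester/B\'ezout identity and a degree count, extraction of the common factor $q=\gcd(f_1,f_2)$, the same argument run on the cofactors $\widetilde f_j = f_j/q$ to force $q(\xi)=0$ and to bound $\mathrm{ord}_{(t-\xi_1)}(\widetilde f_j(t,\theta))$ for some $j$ by $\deg \mathrm{Res}_{X_2}(\widetilde f_1,\widetilde f_2)$, and then Lemma~\ref{lem:aprox} applied to $q$. The only piece you leave open — verifying that the bound coming out of Lemma~\ref{lem:aprox}, which involves the unknown degrees $d_{1q},d_{2q}$ of the common factor, is uniformly at least the stated expression in $\min\{d_{11},d_{12}\}$ and $\min\{d_{21},d_{22}\}$ — is exactly the short algebraic rearrangement the paper carries out; it reduces to observing that $L-(d_{11}d_{22}+d_{12}d_{21})\ge 0$, $d_{2q}\le \min\{d_{21},d_{22}\}$, $d_{1q}\le\min\{d_{11},d_{12}\}$, and $d_{21}+d_{22}-2d_{2q}\ge 0$, with the only mild case split being whether $d_{21}+d_{22}-2d_{2q}-1$ is nonnegative (generic) or equal to $-1$ (when $d_{2q}=d_{21}=d_{22}$, in which case the denominator already equals $\min\{d_{21},d_{22}\}$), so it is indeed mechanical once set up correctly.
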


\begin{proof}
If the resultant $\mbox{Res}_{X_2}(f_1,f_2)$ is not the zero
polynomial, then $\deg_{X_1}(\mbox{Res}_{X_2}(f_1,f_2)) \le
d_{11}d_{22} +d_{12} d_{21} $ and therefore,
$\textrm{ord}_{(t-\xi_1)}(\mbox{Res}_{X_2}(f_1,f_2)(t))\le L$. On
the other hand, since the order of any linear combination of $f_1$
and $f_2$ with coefficients in $\C[X_1,X_2]$, evaluated in
$(t,\theta)$ is higher than $L$, it follows that
$\textrm{ord}_{(t-\xi_1)}(\mbox{Res}_{X_2}(f_1,f_2)(t))>L$.
Therefore, if $L\ge d_{11}d_{22} +d_{12} d_{21}$, we have that
$\mbox{Res}_{X_2}(f_1,f_2)= 0$ and so, $f_1$ and $f_2$ have a
common factor depending on $X_2$.

Let $q:=\mbox{gcd}(f_1,f_2)\in\C[X_1,X_2]$ and let
$\widetilde{f}_1$, $\widetilde{f}_2$ be such that
$f_1=q\widetilde{f}_1$ and $f_2=q\widetilde{f}_2$.  If $q(\xi) \ne
0$, then  $\textrm{ord}_{(t-\xi_1)}(\widetilde f_j(t,\theta)) =
\textrm{ord}_{(t-\xi_1)}(f_j(t,\theta)) >L$ and, repeating the
arguments above, $\mbox{Res}_{X_2}(\widetilde f_1,\widetilde
f_2)=0$, which is a contradiction.

Similarly, as
$\mbox{Res}_{X_2}(\widetilde{f}_1,\widetilde{f}_2)(t)\neq 0$,  it
follows that, for some $j$, $\textrm{ord}_{(t-\xi_1)}(\widetilde f_j(t,\theta)) \le
(d_{11}-d_{1q})(d_{22}-d_{2q})+(d_{12}-d_{1q})(d_{21}-d_{2q})$,
where $d_{iq}=\deg_{X_i}(q)$ for $i=1,2$. Then,
\[\begin{split}
 \textrm{ord}_{(t-\xi_1)} q(t,\theta)  & = \textrm{ord}_{(t-\xi_1)} f_j(t,\theta) - \textrm{ord}_{(t-\xi_1)} \widetilde{f}_j(t,\theta) \\
 & > L - (d_{11}-d_{1q})(d_{22}-d_{2q})-(d_{12}-d_{1q})(d_{21}-d_{2q})\\
 \end{split}
 \]
Since  $L - (d_{11}-d_{1q})(d_{22}-d_{2q})-(d_{12}-d_{1q})(d_{21}-d_{2q}) = L - (d_{11} d_{22} +d_{12} d_{21} )+ d_{2q} (d_{11}+d_{12}-2d_{1q}) + d_{1q}(d_{21}+d_{22})\ge d_{1q}$, by Lemma \ref{lem:aprox}, there is a parametrization of a curve passing through $\xi$ and contained in $V(q) \subset V(f_1, f_2)$ whose initial terms are $a_i(t-\xi_1)^{\gamma_i}$ as long as
\[ \gamma_i \le \dfrac{L - (d_{11}-d_{1q})(d_{22}-d_{2q})-(d_{12}-d_{1q})(d_{21}-d_{2q})- d_{1q}}{d_{2q}}.\]

As
\[ \begin{split}
& \dfrac{L - (d_{11}-d_{1q})(d_{22}-d_{2q})-(d_{12}-d_{1q})(d_{21}-d_{2q})- d_{1q}}{d_{2q}} \\
& =
\dfrac{ L - (d_{11} d_{22} +d_{12} d_{21} )+ d_{1q}(d_{21}+d_{22}-2d_{2q})- d_{1q}}{d_{2q}}+ d_{11}+d_{12}\\
& \ge \dfrac{L-(d_{11}d_{22}+d_{12}d_{21})-\min\{d_{11},
d_{12}\}}{{\rm min}\{d_{21}, d_{22}\}}+d_{11}+d_{12},
\end{split}\]
the proposition follows.
\end{proof}

As before, the given bound can be attained:

\begin{example}  Let $f_1, f_2 \in \C[X_1, X_2]$ be the polynomials
$$f_1(X_1,X_2)= X_1^{d_{11}}(X_2-1)^{d_2}, \ f_2(X_1,X_2)=X_1^{d_{12}}(X_2-1)^{d_2}$$ and $\xi=(0,1)$ a
common zero. Here, $(t,1)$ is a curve of common solutions of $f_1$
and $f_2$ containing $\xi$. Let $\gamma, L \in \Q$ such that $L\ge
(d_{11}+d_{12})d_2$ and $ \gamma\le L$. The vector $(t,
1+t^\gamma)$ satisfies the hypothesis of Proposition
\ref{lem:bivariate} if and only if
$\dfrac{L-\min\{d_{11},d_{12}\}}{d_2}<\gamma$, which is exactly
the same bound given by the proposition.
\end{example}

\begin{remark}
If for $j=1$ or $j=2$, we have that $f_j(\xi) = 0$ and $\cfrac{\partial
f_{j}}{\partial X_2}(\xi)\ne 0$, under the assumptions  of
Proposition \ref{lem:bivariate}, following Remark \ref{rem:NH}, there is a common solution curve for $f_1$ and $f_2$ having a parametrization whose initial
terms are $(t,\sum\limits_{i= 0}^{K}a_i(t-\xi_1)^i))$ for
$K=\lfloor L\rfloor-\min\{d_{11},d_{12}\}$.
\end{remark}

\section{Arbitrary Systems}\label{sec:nvar}

In this section we are going to extend the results of Section
\ref{sec:2var} to arbitrary multivariate polynomial equation
systems.

\subsection{Non-isolated points}

Let $\mathbf{f}=(f_1,\dots, f_m)$ be a polynomial system in
$\C[X_1,\dots,X_n]$ and $V(\mathbf{f})=V(f_1,\dots, f_m)$ be the
set of the common zeros of $\mathbf{f}$ in $\C^n$. Let
$\xi=(\xi_1,\dots, \xi_n)\in \C^n$ be a point in $V(\mathbf{f})$.
The next theorem presents a bound for the vanishing order required
on the system evaluated at a vector of truncated Puiseux series
centered at $\xi$ to ensure that $\xi$ lies in an irreducible
component $W$ of $V(\mathbf{f})$ such that
$\overline{\pi_{X_1}(W)} = \C$, where $\pi_{X_1}:\C^n\rightarrow
\C$ is the projection to the first coordinate, $\pi_{X_1}(x_1,
\dots, x_n)=x_1$, and the closure is taken with respect to the
Zariski topology. The bound is given in terms of the \emph{Noether
exponent} of the ideal $\langle f_1,\dots, f_m\rangle$, that is,
the minimum positive integer $e(\mathbf{f})$ such that
$\big(\sqrt{\langle f_1,\dots,
f_m\rangle}\big)^{e(\mathbf{f})}\subset \langle f_1,\dots,
f_m\rangle$.

In the sequel, for an irreducible variety $C\subset \C^n$ such
that $\overline{\pi_{X_1}(C)} = \C$, we will say that $C$ is a
variety \emph{with free variable $X_1$}.

\begin{theorem} \label{theo:curve} Let $\mathbf{f}=(f_1,\dots, f_m)$ be a polynomial
system in $\C[X_1, \dots, X_n]$ and $\xi=(\xi_1,\dots, \xi_n)\in \C^n$ be a
zero of $\mathbf{f}$. Let $\gamma_0, \dots, \gamma_N, L \in \Q$
such that $0=\gamma_0<\dots<\gamma_N\le L$ and
$$\Theta=(t,
\sum\limits_{i=0}^Na_{i2}(t-\xi_1)^{\gamma_i},
\dots,\sum\limits_{i=0}^Na_{in}(t-\xi_1)^{\gamma_i})$$ be a
Puiseux series vector with coefficients in $\C$ centered at $\xi_1$
such that $a_{0l}=\xi_l$ for all $2 \le l \le n$ and ${\rm
ord}_{(t-\xi_1)}(f_j(\Theta))>L$ for all $ 1 \le j \le m$. Let
$e(\mathbf{f})$ be the Noether exponent of $\langle f_1,\dots,
f_m\rangle$. If $L\ge e(\mathbf{f}),$ then there exists an
irreducible component $W$ of $V(\mathbf{f})$ with free variable
$X_1$ such that $\xi \in W$.
\end{theorem}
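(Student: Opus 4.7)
The plan is to argue by contradiction: assume that no irreducible component of $V(\mathbf{f})$ passing through $\xi$ has free variable $X_1$, and derive a clash with the hypothesis $L \ge e(\mathbf{f})$.

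The first step I would carry out is a simple but essential observation: the order-of-vanishing bound extends from the generators to the whole ideal $I := \langle f_1, \ldots, f_m\rangle$. For any $g = \sum_j h_j f_j \in I$ with $h_j \in \C[X_1, \ldots, X_n]$, the evaluation $h_j(\Theta)$ is a Puiseux series in $(t-\xi_1)$ of order $\ge 0$, because the coordinates of $\Theta$ specialize at $t=\xi_1$ to the finite point $\xi$. Since $\mathrm{ord}_{(t-\xi_1)}(f_j(\Theta))>L$ by hypothesis, each summand $h_j(\Theta)f_j(\Theta)$ has order $>L$, and hence so does $g(\Theta)$.

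Next I would split the minimal primes of $I$ into those $P_1,\dots,P_s$ corresponding to irreducible components of $V(\mathbf{f})$ through $\xi$, and the remaining ones $P_{s+1},\dots,P_r$. Under the contradiction hypothesis, $\pi_{X_1}(V(P_i))$ is a proper closed subset of $\C$ containing $\xi_1$, hence equals $\{\xi_1\}$ for each $i\le s$; this translates into $X_1-\xi_1\in P_i$ for those indices. For each $i>s$, since $\xi\notin V(P_i)$, one can choose $p_i\in P_i$ with $p_i(\xi)\ne 0$, and the product $h:=\prod_{i>s}p_i$ then lies in $\bigcap_{i>s}P_i$ and satisfies $h(\xi)\ne 0$ (with the convention that the empty product is $1$, which handles the case in which every component of $V(\mathbf{f})$ passes through $\xi$). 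Combining the two kinds of vanishing, $h\cdot(X_1-\xi_1)\in P_i$ for all $i$, so $h(X_1-\xi_1)\in \sqrt{I}=\bigcap_i P_i$.

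Finally, the defining property of the Noether exponent yields $\bigl(h(X_1-\xi_1)\bigr)^{e(\mathbf{f})}\in I$, and the preliminary step forces $\mathrm{ord}_{(t-\xi_1)}\bigl((h(X_1-\xi_1))^{e(\mathbf{f})}(\Theta)\bigr)>L$. On the other hand, since the first coordinate of $\Theta$ is $t$, $(X_1-\xi_1)(\Theta)=t-\xi_1$ has order exactly $1$, while $h(\xi)\ne 0$ makes $h(\Theta)$ of order $0$, so the left-hand side equals exactly $e(\mathbf{f})$; this forces $e(\mathbf{f})>L$, contradicting $L\ge e(\mathbf{f})$. The most delicate point, in my view, is locating the correct witness polynomial: the product $h\cdot(X_1-\xi_1)$ is what simultaneously encodes the decomposition of $\sqrt{I}$ according to whether a component passes through $\xi$, and it is precisely calibrated so that its evaluation at $\Theta$ has the minimal nonzero order, which after raising to the $e(\mathbf{f})$-th power exactly saturates the bound $L\ge e(\mathbf{f})$. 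Once this element is identified, everything else is a direct translation between ideal-theoretic information and Puiseux-series orders.
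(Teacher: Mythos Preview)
Your proposal is correct and follows essentially the same approach as the paper: construct a polynomial $h\,(X_1-\xi_1)\in\sqrt{I}$ with $h(\xi)\ne 0$, raise it to the Noether exponent, and compare orders at $\Theta$ to reach a contradiction. The only cosmetic difference is that the paper first shows the union $\mathcal{V}$ of components with free variable $X_1$ is nonempty (using a polynomial $p\in\C[X_1]$ vanishing on $\pi_{X_1}(V(\mathbf{f})\setminus\mathcal{V})$) and then, in a second step, carries out exactly your argument with a polynomial $q$ playing the role of your $h$; your single contradiction step already subsumes both cases.
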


\begin{proof}
Let $\mathcal{V}$ be the algebraic variety of all irreducible
components of $V(\mathbf{f})$ with free variable $X_1$. If $\mathcal{V} = V(\mathbf{f})$, there is nothing to prove.

Let $p\in \C[X_1]$ be the monic polynomial of minimum degree that
vanishes over  $\pi_{X_1}(V(\mathbf{f})- \mathcal{V})$. Suppose
$\mathcal{V}=\emptyset$; then $p\in \sqrt{ \langle f_1, \dots,
f_m\rangle}$ and so, $p^{e(\mathbf{f})}\in \langle f_1, \dots,
f_m\rangle$.  Hence,
 ${\rm ord}_{(t-\xi_1)}(p(\Theta)^{e(\mathbf{f})})=
{e(\mathbf{f})}{\rm ord}_{(t-\xi_1)}(p(t))\le e(\mathbf{f})\le L$.
As the order of any linear combination of $f_1, \dots, f_m$
evaluated in $\Theta$ is higher than $L$, it follows that
$\mathcal{V}\neq\emptyset$.

Assume now that $\xi \not\in \mathcal{V}.$ Then, for every
irreducible component $C$ of $V(\mathbf{f})$ such that $\xi \in
C$, $\pi_{X_1}(C)=\xi_1$. Let $q \in \C[X_1, \dots, X_n]$ such
that $q$ vanishes over the union of all irreducible components of
$V(\mathbf{f})$  that do not contain $\xi$ and $q(\xi)\neq 0$.
Then $(X_1-\xi_1)q \in \sqrt{\langle f_1, \dots, f_m\rangle}$ and
so, $((X_1-\xi_1)q)^{e(\mathbf{f})}$ is a linear combination of
$f_1, \dots, f_m$ with coefficients in $\C[X_1, \dots, X_n]$;
then, ${\rm ord}_{(t-\xi_1)} (((t-\xi_1)
q(\Theta))^{e(\mathbf{f})}) > L$.  But ${\rm
ord}_{(t-\xi_1)}q(\Theta) = 0$. This leads to a contradiction and,
consequently, $\xi\in \mathcal{V}$.
\end{proof}

The following trivial example shows that the bound $L\ge e(\mathbf{f})$ in the previous theorem is sharp.

\begin{example} Let $\mathbf{f}=(x_1^e, x_2,\dots, x_n)$. Then, $V(\mathbf{f}) = \{ 0\}$ and it is easy to see that $e(\mathbf{f}) = e$. Consider $\xi= 0$ and $\Theta =(t,0,0,\dots, 0)$.  Then ${\rm{ord}}_t(f_1(\Theta)) = e$ and $f_j(\Theta)=0$ for every $2\le j \le n$. However, $V(\mathbf{f})$ contains no curve.
\end{example}

\begin{remark} \label{rem:noether} Any explicit upper bound for $e(\mathbf{f})$ provides an explicit bound for the parameter $L$  in Theorem \ref{theo:curve}. For instance, the following bounds could be applied:

\begin{itemize}
\item If $\deg(f_j)\le d$ for every $1\le j \le m$, then $e(\mathbf{f})
\le d^{\min\{n,m\}}$ (see \cite[Theorem 1.3]{Jelonek2005}).

\item For polynomials $f_1,\dots, f_m$ with supports
$\cA_1,\dots, \cA_m\subset (\Z_{\ge 0})^n$ respectively ($\cA_i$
is the set of vectors of exponents of the monomials of $f_i$ with
nonzero coefficients for all $1 \le i \le m$),  $e(\mathbf{f}) \le n^{n+2}
n! {\rm vol}_n(\cA \cup\Delta_n)$, where  $\cA =
\bigcup_{j=1}^m \cA_j$ and $\Delta_n$ is the standard simplex of
$\R^n$ (see \cite{Sombra99}).
Under certain assumption on an associated polytope, the following smaller bound holds:
$e(\mathbf{f}) \le \min\{n+1, m\}^2 n! {\rm vol}_n(\cA \cup\Delta_n)$
(see \cite[Theorem 2.10]{Sombra99}).
\end{itemize}
\end{remark}

\subsection{Varieties of dimension 1}\label{sec:dimV le1}

Under certain assumptions,  for a point $\xi$ in an algebraic variety $V(\mathbf{f})$, Theorem \ref{theo:curve} in the previous section ensures the
existence of a positive dimensional component of $V(\mathbf{f})$ with free variable $X_1$ containing
$\xi$.

If, in addition to the conditions
of Theorem \ref{theo:curve}, $\dim(V(\mathbf{f}))= 1$, a question that arises naturally is
to what extent the given Puiseux series vector coincides with the
expansion of a parametrization of a curve in $V(\mathbf{f})$
containing the point $\xi$. In this section we give a degree bound
that enables us to answer this question. The bound depends on the Noether exponent of the ideal and the degree $\deg(V(\mathbf{f}))$ of the variety (for the definition of degree we use, see \cite{Heintz83}).

In order to deal with
$1$-dimensional varieties, we will use the notion of a
\emph{geometric resolution}, widely used in computational
algebraic geometry (see for instance \cite{GLS01}).

\begin{definition}
 Let $V= \{\xi^{(1)}, \dots, \xi^{(D)}\}\subset \overline k^n$ be a
zero-dimensional variety defined by polynomials in $k[X_1,\dots, X_n]$, where $k$ is a field of characteristic $0$ and $\overline k$ an algebraic closure of $k$. Given a
linear form $\ell = \ell_1X_1 + \dots + \ell_n X_n$ in $k[X_1,
\dots, X_n]$ such that $\ell(\xi^{(i)})\ne \ell(\xi^{(j)})$ if $i
\ne j$, the following polynomials completely characterize $V$:
\begin{itemize}
\item the minimal polynomial $q = \prod\limits_{i=1}^D(Y - \ell(\xi^{(i)})) \in k[Y]$ of $\ell$ over
the variety $V$ (where $Y$ is a new variable),
\item polynomials $v_1, \dots, v_n \in k[Y]$ with $\deg(v_j) < D$ for every $1 \le j \le n$
satisfying $\xi^{(i)} = ( v_1(\ell(\xi^{(i)})),\dots, v_n(\ell(\xi^{(i)})))$ for every $1\le i\le D$.
\end{itemize}

The family of univariate polynomials $(q, v_1,\dots, v_n) \in
k[Y]^{n+1}$ is called the \emph{geometric resolution} of $V$ (or the geometric resolution of $k[V]$) associated
with  $\ell$. We have
$$V = \{(v_1 (y), \dots, v_n (y)) \in \overline k^n \mid y
\in \overline k, q(y)=0\}.$$
\end{definition}

The notion of geometric resolution can be extended to any equidimensional variety. In our situation, it can be defined as follows:
Let $\mathcal{V}\subset \C^n$ be an equidimensional variety of dimension $1$ defined by polynomials in $\C[X_1,\dots, X_n]$ such that $X_1$ is free for each irreducible component of $\mathcal{V}$. By considering $\C(X_1) \otimes \C[\mathcal{V}]$, we are in a zero-dimensional situation, and  a \emph{geometric resolution of $\mathcal{V}$ with free variable $X_1$} is a geometric resolution $(q, v_{2},\dots, v_n) \in \C(X_1)[Y]^{n}$ of $\C(X_1) \otimes \C[\mathcal{V}]$ associated to a linear form $\ell \in \C[X_{2},\dots, X_n]$.

\begin{theorem} \label{theo:dim1} Let $\mathbf{f}=(f_1,\dots, f_m)$ be a polynomial
system in $\C[X_1, \dots, X_n]$ such that $\dim(V(\mathbf{f}))\le
1$ and $\xi=(\xi_1,\dots, \xi_n)\in \C^n$ be a zero of $\mathbf{f}$. Let
$\gamma_0, \dots, \gamma_N, L \in \Q$ such that
$0=\gamma_0<\dots<\gamma_N\le L$ and
$$\Theta=(t,\sum\limits_{i=0}^Na_{i2}(t-\xi_1)^{\gamma_i},
\dots,\sum\limits_{i=0}^Na_{in}(t-\xi_1)^{\gamma_i})$$ be a
Puiseux series vector with coefficients in $\C$ centered at
$\xi_1$ such that $a_{0l}=\xi_l$ for all $2 \le l \le n$ and ${\rm
ord}_{(t-\xi_1)}(f_j(\Theta))>L$ for all $ 1 \le j \le m$. Let
$e(\mathbf{f})$ be the Noether exponent of $\langle f_1, \dots,
f_m\rangle$. If $L\ge  e(\mathbf{f})\deg(V(\mathbf{f}))$, there exists a curve $W$ in
$V(\mathbf{f})$ with free variable $X_1$ such that $\xi \in W$ and
there is a parametrization of $W$ whose initial terms are
$\Theta_M:=(t,\sum\limits_{i=0}^M a_{i2}(t-\xi_1)^{\gamma_i},
\dots,\sum\limits_{i=0}^M a_{in}(t-\xi_1)^{\gamma_i})$, where
$$M=  \max \Big\{ i\in \{0,\dots, N\} \mid \gamma_i \le \dfrac{L}{e(\mathbf{f})
\deg(V(\mathbf{f}))}\Big\}.$$
\end{theorem}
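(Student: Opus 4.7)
The plan is to reduce the multivariate problem to the bivariate Lemma~\ref{lem:aprox} via a geometric resolution of the $1$-dimensional part $\mathcal{V}$ of $V(\mathbf{f})$ with free variable $X_1$, using the slack in the hypothesis $L\ge e(\mathbf{f})\deg(V(\mathbf{f}))$ to absorb any ``vertical'' components of $V(\mathbf{f})$.

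Since $L\ge e(\mathbf{f})\deg(V(\mathbf{f}))\ge e(\mathbf{f})$, Theorem~\ref{theo:curve} applies and yields that $\xi$ lies in the union $\mathcal{V}$ of the irreducible components of $V(\mathbf{f})$ with free $X_1$; as $\dim V(\mathbf{f})\le 1$, this $\mathcal{V}$ is an equidimensional curve of degree at most $\deg V(\mathbf{f})$. The remaining irreducible components of $V(\mathbf{f})$ are then either zero-dimensional or curves contained in vertical hyperplanes $\{X_1=c_k\}$ (no zero-dimensional component can contain $\xi$, since $\xi$ already lies on the larger curve $\mathcal{V}$). Consequently, the polynomial $r(X_1)=\prod_k(X_1-c_k)$, with the product ranging over the distinct $X_1$-coordinates of these ``bad'' components, satisfies $\deg r\le \deg V(\mathbf{f})-\deg\mathcal{V}$, whence ${\rm mult}(\xi_1,r)\le \deg V(\mathbf{f})-\deg\mathcal{V}$.

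Next, I would choose a generic linear form $\ell=\lambda_2 X_2+\cdots+\lambda_n X_n$ and an associated geometric resolution $(q,v_2,\dots,v_n)$ of $\mathcal{V}$: after clearing $X_1$-denominators, $q\in\C[X_1,Y]$ is a polynomial of total degree at most $\deg\mathcal{V}$, and the $v_l=p_l/w\in \C(X_1)[Y]$ share a common denominator $w(X_1)\in\C[X_1]$ satisfying $w(\xi_1)\ne 0$ (achievable for generic $\ell$, since this amounts to $\ell$ separating the fiber of $\mathcal{V}$ over $\xi_1$). Both $r(X_1)\cdot q(X_1,\ell(X_2,\dots,X_n))$ and each $r(X_1)\cdot(w(X_1)X_l-p_l(X_1,\ell(X_2,\dots,X_n)))$ vanish on $V(\mathbf{f})$, so by the Noether exponent their $e(\mathbf{f})$-th powers lie in $\langle f_1,\dots,f_m\rangle$; evaluating at $\Theta$ yields
\[
{\rm ord}_{(t-\xi_1)}\,q(t,\ell(\Theta))>\frac{L}{e(\mathbf{f})}-(\deg V(\mathbf{f})-\deg\mathcal{V}),
\]
together with the analogous bound ${\rm ord}_{(t-\xi_1)}(w(t)\Theta_l-p_l(t,\ell(\Theta)))>L/e(\mathbf{f})-(\deg V(\mathbf{f})-\deg\mathcal{V})$ for each $l\in\{2,\dots,n\}$.

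Lemma~\ref{lem:aprox} applied to $q(t,Y)$ with $\theta=\ell(\Theta)$ centered at $(\xi_1,\ell(\xi))$ then furnishes a Puiseux parametrization $\eta(t)$ of a branch of $V(q)\subset\C^2$ through $(\xi_1,\ell(\xi))$ whose initial terms agree with $\ell(\Theta)$ up to the largest $\gamma_i$ bounded by $(L/e(\mathbf{f})-(\deg V(\mathbf{f})-\deg\mathcal{V})-{\rm mult}(\xi_1,c))/\deg_Y q$, where $c\in\C[X_1]$ is the leading coefficient of $q$ in $Y$; the total-degree inequality ${\rm mult}(\xi_1,c)+\deg_Y q\le \deg\mathcal{V}$ combined with $L\ge e(\mathbf{f})\deg V(\mathbf{f})$ shows, after a routine algebraic manipulation, that this bound is $\ge L/(e(\mathbf{f})\deg V(\mathbf{f}))$. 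Setting $W$ as the image of the parametrization $(t,v_2(t,\eta(t)),\dots,v_n(t,\eta(t)))$ produces the desired branch of $\mathcal{V}$ through $\xi$, and the main technical point is the componentwise comparison: combining the Noether estimate for $w(t)\Theta_l-p_l(t,\ell(\Theta))$ with ${\rm ord}(p_l(t,\ell(\Theta))-p_l(t,\eta(t)))>L/(e(\mathbf{f})\deg V(\mathbf{f}))$ (which holds since $\ell(\Theta)$ and $\eta(t)$ agree up to that order and $p_l$ is polynomial in $Y$) and $w(\xi_1)\ne 0$ yields ${\rm ord}(\Theta_l-v_l(t,\eta(t)))>L/(e(\mathbf{f})\deg V(\mathbf{f}))$, which is precisely the desired matching up to $\Theta_M$.
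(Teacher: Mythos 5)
Your argument is correct in outline and arrives at the right conclusion, but it takes a genuinely different route from the paper at the key step. After the common opening (Theorem~\ref{theo:curve}, the equidimensional curve $\mathcal{V}$, the vertical factor $p$ or $r$, and the geometric resolution with a generic $\ell$), the paper only applies the Noether-exponent argument to the single polynomial $\hat q(X_1,\ell)\,p(X_1)$, deduces that some Puiseux root $\ell(\eta_h)$ of $\hat q$ satisfies ${\rm ord}(\ell(\Theta)-\ell(\eta_h))>L/(e(\mathbf{f})\deg V(\mathbf{f}))$ via the computation inside Lemma~\ref{lem:aprox}, and then transfers this bound to every coordinate directly by imposing the genericity condition ${\rm ord}(\ell(\Theta)-\ell(\eta_h))=\min_k{\rm ord}(\Theta_k-(\eta_h)_k)$. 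You instead carry out the componentwise comparison by hand: you also run the Noether estimate on the second set of relations $r(X_1)\bigl(w(X_1)X_l-p_l(X_1,\ell)\bigr)$, you build the parametrization explicitly as $(t,v_2(t,\eta(t)),\dots,v_n(t,\eta(t)))$, and you combine the two order bounds. Both routes use the genericity of $\ell$, but for different purposes: the paper for the ``order-detecting'' property, you for the non-vanishing $w(\xi_1)\neq 0$ of the common denominator and for the separation of the fiber over $\xi_1$. The paper's route is shorter because it avoids the $v_l$ entirely; your route is more constructive but needs the extra genericity claim about $w$, which you assert without proof and which deserves a sentence of justification (it is plausible, and holds in the examples one checks, but it is not the same condition the paper imposes on $\ell$). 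One small imprecision to flag: Lemma~\ref{lem:aprox}, as stated, gives that the initial \emph{terms} of $\eta(t)$ coincide with those of $\ell(\Theta)$ up to $\gamma_M$, i.e.\ ${\rm ord}(\ell(\Theta)-\eta(t))>\gamma_M$; the paper in fact extracts the stronger order bound ${\rm ord}(\ell(\Theta)-\ell(\eta_h))>L/(e(\mathbf{f})\deg V(\mathbf{f}))$ from the \emph{proof} of the lemma (the factorization of $q$), not from its statement. You should do the same when you invoke ``$\ell(\Theta)$ and $\eta(t)$ agree up to that order'' --- the bound $>\gamma_M$ suffices to match $\Theta_M$, so your argument goes through, but the claim as written slightly overreaches Lemma~\ref{lem:aprox}'s stated conclusion.
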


\begin{proof}
By Theorem \ref{theo:curve}, the point $\xi$ lies in an
irreducible component $W$ of $V(\mathbf{f})$  with free variable
$X_1$. Let $\mathcal{V}$ be the union of all the irreducible
components of $V(\mathbf{f})$ with free variable $X_1$, which is a
nonempty equidimensional variety of dimension $1$. Replace $X_1$
by $t$ in the polynomials $f_1,\dots, f_m$, and consider the ideal
$\C(t) \otimes \langle \mathbf{f}\rangle \subset \C(t)[X_2,\dots,
X_n]$ and its zeros $\eta_1,\dots, \eta_D\in
\C\{\!\{t-\xi_1\}\!\}^{n-1}$. Let $\ell=\sum\limits_{k=2}^n\ell_k
X_k$ be a generic linear form and $(q, v_2, \dots, v_n)\in
\C(t)[Y]$ be the geometric resolution of $\mathcal{V}$ associated
with $\ell$. Then, $q(Y)=\prod_{h=1}^{D}\limits(Y-\ell(\eta_h))$ and $D\le \deg(\mathcal{V})$.
As $\ell$ is
generic, we may assume that, for every $1\le h\le D$,
$\textrm{ord}_{(t-\xi_1)}(\ell(\Theta)-\ell(\eta_h))=\min \{
\textrm{ord}_{(t-\xi_1)}
((\Theta)_k-(\eta_h)_k): 2\le k \le n\}$.

Let $\varphi: \mathcal{V} \to \C^2$ be the map $\varphi(x) =
(x_1,\ell(x_2,\dots, x_n))$. Then, there is a polynomial $c\in
\C[t]$ such that $\hat q(t,Y):=c(t) q(Y) \in \C[t,Y]$ and $\hat q$ defines
the Zariski closure of $\varphi(\mathcal{V})$. Note that
$\deg_t(c) + D \le \deg (\hat q )\le
\deg(\mathcal{V})$ (see \cite[Lemma 2]{Heintz83}).

As in the proof of Theorem \ref{theo:curve}, let $p\in \C[X_1]$ be
a monic polynomial of minimum degree that vanishes over
$\pi_{X_1}(V(\mathbf{f})- \mathcal{V})$ (if $\mathcal{V} = V(\mathbf{f})$ take $p=1$).  We have that $\deg(p)\le
\deg(V(\mathbf{f}))-\deg(\mathcal{V})$. Since $\hat q(X_1,
\ell)p(X_1)$ vanishes over $V(\mathbf{f})$, it follows that $(\hat
q(X_1, \ell)p(X_1))^{e(\mathbf{f})}$ is a linear combination of
$f_1, \dots, f_m$ with coefficients in $\C[X_1, \dots, X_n]$;
therefore, $\mbox{ord}_{(t-\xi_1)}((\hat q(t, \ell(\Theta))
p(t))^{e(\mathbf{f})})>L$. Then,
\[
\begin{split}
   \mbox{ord}_{(t-\xi_1)}(\hat q(t, \ell(\Theta)))
&  > \frac{L}{e(\mathbf{f})}-\deg(p)
   \ge \frac{L}{e(\mathbf{f})}- \deg(V(\mathbf{f}))+\deg(\mathcal{V}) \\
& \ge \Big(\frac{L}{e(\mathbf{f}) \deg(V(\mathbf{f}))}-1\Big)\deg(V(\mathbf{f}))+ D +\deg_t(c)\\
& \ge\frac{LD}{e(\mathbf{f}) \deg(V(\mathbf{f}))} + \deg_t(c).
\end{split}
\]

Since $c$ is the leading coefficient of $\hat q$ and $\deg_t(c) \ge {\rm mult}(\xi_1, c)$, by Lemma \ref{lem:aprox},  there exists $1\le h \le D$ such that
\[ \textrm{ord}_{(t-\xi_1)}( \ell(\Theta) - \ell(\eta_h)) > \dfrac{\frac{LD}{e(\mathbf{f}) \deg(V(\mathbf{f}))} + \deg_t(c) - {\rm mult}(\xi_1, c)}{D}\ge \frac{L}{e(\mathbf{f}) \deg(V(\mathbf{f}))}.\]
The theorem follows by our assumption on $\ell$.
\end{proof}

Although Theorem \ref{theo:dim1} states that, in the general case, a large number of terms only provide a few ones of the initial part of a parametrization, the following example shows that the precision order is sharp for certain choices of the parameters.

\begin{example} Let $\mathbf{f} = (\prod\limits_{k=1}^d (x_1 - kx_2)^e, x_3,\dots, x_n)$,
and $\xi = 0$ a common zero. It is easy to see that $e(\mathbf{f})=
e$ and $\deg(V(\mathbf{f})) = d$. Taking $L= ed$, the vector
$\Theta = (t, t+t^{1+\varepsilon},0,\dots,0)$ satisfies the
hypothesis from Theorem \ref{theo:dim1} for all $\varepsilon>0$
since ${\rm{ord}}_t(f_1(\Theta))= e (d+\varepsilon)>L$ and
$f_j(\Theta)=0$ for all $ 2 \le j \le n-1$. In this
case the precision bound from the previous theorem is exactly $\frac{L}{de}=1$ which
coincides with the first terms from $\Theta$ that correspond to a parametrization of a
curve in $V(\mathbf{f})$ for all $\varepsilon >0$.
\end{example}

\begin{remark}  Explicit upper bounds for both $e(\mathbf{f})$ and
$\deg(V(\mathbf{f}))$ provide explicit bounds for the parameters in Theorem \ref{theo:dim1}. For instance, using the bounds for $e(\mathbf{f})$ already stated in Remark \ref{rem:noether}, we have:

\begin{itemize}
\item If $\deg(f_j)\le d$ for every $1\le j \le m$, then
$\deg(V(\mathbf{f})) \le d^{\min\{n,m\}}$ (see \cite[Theorem
1]{Heintz83}). Therefore, for
$L\ge d^{2\min\{n,m\}}$ and $\Theta$ satisfying the conditions of the statement, if $\gamma_M \le L d^{-2\min\{n,m\}}$, then $\Theta_M$ is the initial part of a parametrization of a curve in $V(\mathbf{f})$ containing $\xi$.

\item For polynomials $f_1,\dots, f_m$ with supports
$\cA_1,\dots, \cA_m\subset (\Z_{\ge 0})^n$ respectively,
$\deg(V(\mathbf{f}))\le n! {\rm vol}_n(\cA \cup\Delta_n)$ (see
\cite[Proposition 2.12]{KPS01}), where $\cA = \bigcup_{j=1}^m \cA_j$ and $\Delta_n$ is the standard
simplex of $\R^n$. Therefore, using the bound for $e(\mathbf{f})$ stated in Remark \ref{rem:noether}, for $L \ge n^{n+2} (n! {\rm vol}_n(\cA \cup\Delta_n))^2$ and $\Theta$ satisfying the conditions of the statement, if $\gamma_M \le L\, n^{-n-2}(n! {\rm vol}_n(\cA \cup\Delta_n))^{-2}$, then $\Theta_M$ is the initial part of a parametrization of a curve in $V(\mathbf{f})$ containing $\xi$.

When $m=n$, the sharper bound
$\deg(V(\mathbf{f}))\le\mathcal{MV}_n(\cA_1\cup\Delta_n, \dots,
\cA_n\cup\Delta_n)$ holds (see \cite[Theorem 16]{HJS13}), where $\mathcal{MV}_n$ denotes the $n$-dimensional mixed volume. This leads to sharper bounds for $L$ and the precision order $\gamma_M$.
Under certain assumptions on an associated polytope, the bounds from \cite[Theorem 2.10]{Sombra99} also lead to improved estimates.

\end{itemize}
\end{remark}

\bigskip

\noindent \textbf{Acknowledgements.} This work was partially supported by the Argentinean research grants
CONICET PIP 0099/11 and UBACYT 20020120100133 (2013-2016).


\begin{thebibliography}{00}

\bibitem{AV11} D. Adrovic, J. Verschelde \textit{Tropical algebraic geometry in Maple:
 a preprocessing algorithm for finding common factors for multivariate polynomials
 with approximate coefficients}. J. Symbolic Comput. 46 (2011), no. 7,
 755--772.

\bibitem{AV12} D. Adrovic, J. Verschelde. Computing Puiseux series for
algebraic surfaces. Proceedings of the 37th International
Symposium on Symbolic and Algebraic Computation (ISSAC 2012),
Grenoble, France, July 22-25, 2012, 20--27.

\bibitem{BHPS09} D. Bates, J. Hauenstein, C. Peterson, A. Sommese. \textit{ A local
dimension test for numerically approximated points on algebraic
sets.} SIAM J. Numerical Analysis 47 (2009), 3608--3623.

\bibitem{EM99} M. Elkadi, B. Mourrain, \textit{A new algorithm for the geometric
decomposition of a variety}. Proceedings of ISSAC'99, 9--16, ACM, New York, 1999.

\bibitem{GH91} M. Giusti, J. Heintz, \textit{Algorithmes - disons rapides - pour la
d\'ecomposition d'{}une vari\'et\'e alg\'ebrique en composantes
irr\'eductibles et \'equidimensionelles.} Proc. Effective methods in
algebraic geometry (Castiglioncello, 1990), 169--194, Progr. Math.,
94 Birkh\"auser Boston, Boston, MA, 1991.

\bibitem{GLS01} M. Giusti, G. Lecerf, B. Salvy, \textit{A Gr\"obner free alternative
for polynomial system solving.} J. Complexity 17 (2001), no. 1,
154--211.

\bibitem{Heintz83} J. Heintz, Definability and fast quantifier elimination in algebraically closed fields. Theoret. Comput. Sci. 24 (1983), no. 3, 239--277.

\bibitem{HKPSW00} J. Heintz, T. Krick, S. Puddu, J. Sabia, A.
Waissbein, \textit{Deformation techniques for efficient polynomial
equation solving}. J. Complexity 16 (2000), no. 1, 70--109.

\bibitem{HJS13} M.I. Herrero, G. Jeronimo, J. Sabia. \textit{Affine solution sets of
sparse polynomial systems.} J. Symbolic Comput. 51 (2013),
34--54.

\bibitem{Jelonek2005} Z. Jelonek, On the effective Nullstellensatz.
 Invent. Math. 162 (2005), no. 1, 1--17.

\bibitem{JKSS04} G. Jeronimo, T. Krick, J. Sabia, M. Sombra, \textit{The
computational complexity of the Chow form.} Found. Comput. Math. 4
(2004), no. 1, 41--117.

\bibitem{JS02} G. Jeronimo, J. Sabia, \textit{Effective equidimensional
decomposition of affine varieties}. J. Pure Appl. Algebra  169
(2002),  no. 2-3, 229--248.

\bibitem{KPS01} T. Krick, L. M. Pardo, M. Sombra, \textit{Sharp estimates for
the arithmetic Nullstellensatz}. Duke Math. J.  109  (2001),  no.
3, 521--598.

\bibitem{KL08} Y. C. Kuo, T. Y. Li, \textit{Determining dimension of the solution component that contains a computed zero of a
polynomial system}. J. Math. Anal. Appl. 338 (2008), 840--851.

\bibitem{Lecerf03} G. Lecerf, \textit{Computing the equidimensional decomposition of
an  algebraic closed set by means of lifting fibers}. J. Complexity
19 (2003), no. 4, 564--596.

\bibitem{Sombra99} M. Sombra, \textit{A Sparse Effective
Nullstellensatz}. Adv. Appl. Math. 22 (1999), 271--295.

\bibitem{SVW03} A. J. Sommese, J. Verschelde, C. W. Wampler,
 \textit{Numerical irreducible decomposition using PHCpack.} Algebra, geometry, and software systems, 109--129, Springer, Berlin, 2003.

\bibitem{SW05} A. J. Sommese, C. W. Wampler, \textit{The numerical solution of
systems of polynomials arising in engineering and science.}  World
Scientific Publishing Co. Pte. Ltd., Hackensack, NJ, 2005.

\bibitem{Walker} R.J. Walker, \textit{Algebraic Curves.} Princeton Mathematical Series, vol. 13. Princeton University Press, Princeton, NJ, 1950.


\end{thebibliography}
\end{document}